\theoremstyle{plain}
\newtheorem{theorem}[subsection]{Theorem}
\newtheorem{lemma}[subsection]{Lemma}
\theoremstyle{definition}
\newcommand{\Z}{\mathbb Z}
\newcommand{\R}{\mathbb{R}}
\title{A short proof of a sharp Weyl law for the special orthogonal group}
\author[F. Chamizo]{Fernando Chamizo}
\address[F. Chamizo]{Department of Mathematics, Universidad Aut\'{o}noma de Madrid and ICMAT, 28049 Madrid, Spain}
\email{fernando.chamizo@uam.es}
\author[J. Granados]{Jos\'{e} Granados}
\address[J. Granados]{
Instituto de Ciencias Matem\'{a}ticas, Consejo Superior de Investigaciones Cient\'{\i}ficas, Nicol\'{a}s Cabrera 13-15, 28059 Madrid, Spain}
\email{jose.granados@icmat.es}
\thanks{The first author is partially supported by the grant  MTM2017-83496-P 
from
the Spanish Ministry of Economy and Competitiveness and through the ``Severo Ochoa Programme for Centres of Excellence in R{\&}D'' (SEV-2015-0554).
The
second author is supported by a PhD fellowship at the Instituto de Ciencias Matem\'{a}ticas.}
\keywords{Weyl's law, special orthogonal group, modular forms}
\subjclass[2010]{58C40, 22C05, 11F30}
\begin{document}

\begin{abstract}
We give a short proof of a strong form of Weyl's law for $\text{SO}(N)$ using well known facts of the theory of modular forms. The exponent of the error term is sharp 
when the rank is at least~$4$. We also discuss the cases with smaller rank improving previous results.  
\end{abstract}

\maketitle


\section{Introduction}

If $M$ is a compact Riemannian manifold of dimension $d$ then the spectrum of Laplace-Beltrami operator $-\Delta$ on $M$ is discrete and corresponding to nonnegative eigenvalues. Let $\mathcal{N}(\lambda)$ be the counting function, namely the cardinality of the eigenvalues less or equal than $\lambda$ counted with multiplicity. Weyl's law states the asymptotic behavior
\begin{equation}\label{wlaw}
 \lim_{\lambda\to\infty}
 \lambda^{-d/2}
 \mathcal{N}(\lambda)
 =
 \frac{2\mathop{\text{Vol}}(M)}{d(4\pi)^{d/2}\Gamma(d/2)}
\end{equation}
where $\mathop{\text{Vol}}(M)=\int_M\omega$ with $\omega$ the Riemannian volume form. 
It comes from early works by Weyl related to mathematical physics but it seems that in this generality the first proof was not given until~1949 \cite{MiPl}. The study of sharper asymptotics in this setting or allowing boundaries has revealed a deep connection between analysis, geometry and mathematical physics \cite{ivrii}. In this interplay,  arithmetic has not stood aside \cite{hejhal} \cite{colin} \cite{skriganov} and it has a main role in this paper.

We use very basic properties of modular forms. Recall that they are holomorphic functions on the upper half plane with a Fourier expansion $\sum_{k=0}^\infty a_ke^{2\pi ikz}$. They have, in some sense, symmetries given by a finite index subgroup of $\text{SL}_2(\Z)$. In particular, if $f$ is a modular form of weight $r$ the function $|\Im z|^{r/2}|f(z)|$ is invariant under the transformations of the group and if it is a so-called cusp form, it remains bounded. An elementary argument \cite[Lem.\;3.2]{chamizo_a}
\cite[Th.\;5.3]{iwaniec} proves
\begin{equation}\label{auto_b}
\sum_{k\le K} a_k=O\big(K^{r/2}\log K\big)
\qquad\text{and}\qquad
C^{-1}<K^{-r}\sum_{k\le K} |a_k|^2<C
\end{equation}
for $K$ large and  certain $C>0$ depending on $f$. 
The connection with modular forms we exploit in this work, is that 
when $P$ is a harmonic polynomial of degree $\nu>0$ the function
\begin{equation}\label{thetaP}
 \Theta_P(z)
 =
 \sum_{k=1}^\infty
 k^{\nu/2}
 r_n(k,P)e^{2\pi ikz}
 \qquad
 \text{where}
 \quad
 r_n(k,P)
 =
 \sum_{\|\vec{m}\|^2=k}
 P\Big(\frac{\vec{m}}{\sqrt{k}}\Big)
\end{equation}
(here it is assumed $\vec{m}\in\Z^n$) 
is a cusp form  of weight $\nu+n/2$. The proof is essentially an involved application of the Poisson summation formula \cite[Th.\;10.9]{iwaniec} being the hardest part the computation of the multiplier that is irrelevant for \eqref{auto_b}.

\medskip

Our goal is to provide a short proof of  

\begin{theorem}\label{mainth}
Let $M=\text{\rm SO}(N)$, $n$ its rank  and $C_d$ the right hand side of \eqref{wlaw}. Then
\[
 \mathcal{N}(\lambda)
 -
 C_d\lambda^{d/2}
 =
 \begin{cases}
  O\big(\lambda^{d/2-1}\big) &\text{if }n>4,
  \\
  O\big(\lambda^{d/2-1}\log\lambda\big) &\text{if }n=4.
 \end{cases}
\]
\end{theorem}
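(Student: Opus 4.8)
The plan is to make the spectrum of $-\Delta$ completely explicit and reduce the whole problem to a weighted count of lattice points in a ball, to which \eqref{auto_b} and a classical lattice-point estimate apply. Fix the bi-invariant metric underlying $\mathcal{N}$ and let $\langle\,\cdot\,,\cdot\,\rangle$ be the induced inner product on the real span of the weights. By the Peter--Weyl theorem $L^2(\text{SO}(N))$ decomposes over the irreducible representations $\pi$, the Laplacian acting on the $\pi$-isotypic component by the Casimir scalar $c(\pi)=\|\Lambda+\rho\|^2-\|\rho\|^2$ with multiplicity $(\dim\pi)^2$, where $\Lambda$ runs over the dominant weights and $\rho$ is half the sum of the positive roots. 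Writing $\mu=\Lambda+\rho$, the Weyl dimension formula gives $\dim\pi=W(\mu)$ with
\[
 W(\mu)=\prod_{\alpha>0}\frac{\langle\mu,\alpha\rangle}{\langle\rho,\alpha\rangle},
\]
a homogeneous polynomial whose degree equals the number of positive roots, namely $(d-n)/2$. Hence, with $R=\lambda+\|\rho\|^2$,
\[
 \mathcal{N}(\lambda)=\sum_{\substack{\mu\ \text{strictly dominant}\\ \|\mu\|^2\le R}} W(\mu)^2 .
\]

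Since $W$ is anti-invariant under the Weyl group $\mathcal{W}$, the summand $W(\mu)^2$ is $\mathcal{W}$-invariant and vanishes on the walls, so I would average over $\mathcal{W}$ to replace the sum over strictly dominant $\mu$ by $|\mathcal{W}|^{-1}$ times the sum over \emph{all} $\mu$ in the full-rank weight lattice $L$. After rescaling the inner product so that the associated quadratic form is integral, this is a sum of the shape $\sum_{\|\mu\|^2\le R}W(\mu)^2$ over a fixed lattice coset. As $W(\mu)^2$ is homogeneous of the even degree $d-n$, I would expand it into solid harmonics,
\[
 W(\mu)^2=\sum_{2i\le d-n}\|\mu\|^{2i}\,h_{d-n-2i}(\mu),
\]
with each $h_{d-n-2i}$ harmonic and homogeneous of degree $d-n-2i$. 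The single term with $2i=d-n$ is the purely radial part $c_0\|\mu\|^{d-n}$; every other term carries a harmonic polynomial of positive degree $\nu=d-n-2i$.

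For the radial part I would sum by spheres, $c_0\sum_{k\le R}k^{(d-n)/2}r(k)$ with $r(k)=\#\{\mu\in L:\|\mu\|^2=k\}$, and feed in the classical count of lattice points in a Euclidean ball, $\sum_{k\le t}r(k)=(\mathrm{vol})\,t^{n/2}+E(t)$, where $E(t)=O(t^{n/2-1})$ for $n\ge5$ and $E(t)=O(t\log t)$ for $n=4$. Abel summation against the smooth weight $k^{(d-n)/2}$ turns the volume term into a main term $\asymp R^{d/2}$, which must coincide with $C_d\lambda^{d/2}$ up to the admissible error, since \eqref{wlaw} already fixes the leading coefficient and $R=\lambda+O(1)$ only perturbs lower order terms of size $\lambda^{d/2-1}$. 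The same summation turns $E$ into an error bounded by $R^{(d-n)/2}E(R)+\int_1^R t^{(d-n)/2-1}E(t)\,dt$, which is $O(R^{d/2-1})$ for $n>4$ and $O(R^{d/2-1}\log R)$ for $n=4$.

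For each non-radial piece the sum over a sphere $\sum_{\|\mu\|^2=k}h_\nu(\mu)$ (with $\nu>0$) is, up to the factor $k^{\nu/2}$, the $k$-th Fourier coefficient of $\Theta_{h_\nu}$ from \eqref{thetaP}, a cusp form of weight $\nu+n/2$; the coset and level introduced by $L$ change only the congruence group, not the validity of \eqref{auto_b}. Thus its partial sums are $O\big(K^{(\nu+n/2)/2}\log K\big)$, and a second Abel summation against $k^i$ gives a contribution $O\big(R^{d/2-n/4}\log R\big)$ for every such piece. This is $o(R^{d/2-1})$ when $n>4$ and exactly $O(R^{d/2-1}\log R)$ when $n=4$, so it is dominated by, or matches, the radial error; recalling $R=\lambda+\|\rho\|^2$ then yields the theorem. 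The only genuinely nontrivial inputs are the cusp-form cancellation recorded in \eqref{auto_b} and the classical ball count, which together improve the trivial boundary error $O(R^{(d-1)/2})$ to $O(R^{d/2-1})$. Accordingly the main obstacle is not any single estimate but the bookkeeping: matching the harmonic decomposition to the cusp-form coefficients and verifying that both the radial count and the cusp-form bound degrade by precisely one logarithm exactly at $n=4$, which is what produces the stated dichotomy.
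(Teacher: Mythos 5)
Your proposal is correct and follows essentially the same route as the paper: reduction to a weighted lattice-point count via the Weyl dimension formula and Weyl-group symmetry (the paper's Lemma~\ref{lsum}), decomposition of the multiplicity polynomial into a radial part plus harmonic pieces, the classical ball count \eqref{av_rn} for the radial part, the cusp-form bound \eqref{auto_b} for the harmonic pieces, and identification of the leading constant through \eqref{wlaw}. The only cosmetic difference is that you treat both parities of $N$ uniformly in lattice-coset language, whereas the paper handles the odd case explicitly by inclusion-exclusion over the matrices $T_B$ and spherical functions for the resulting quadratic forms --- the same argument in different clothing.
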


Recall that the rank of $\text{\rm SO}(N)$ is  $n=N/2$ for even $N$ and $n=(N-1)/2$ for odd $N$.
As explained later, $\log \lambda$ can be replaced by $(\log \lambda)^\alpha$ for some $\alpha<1$ but it would require non-elementary techniques that would contrast with the simple arguments employed in our proof in the next section. We think that our approach, with non-essential modifications, may also cover the cases $\text{U}(N)$ and the classical groups (in the tables \cite[VIII.1,IX.8]{simon}).

For $n>4$ in principle this result  follows from \cite[Th.\;5.1]{MoTa} but we think that there is a gap in the proof provided there related with the equidistribution of lattice points on spheres. It can be quantified with homogeneous polynomials $P$ in $n$ variables through
\begin{equation}\label{En}
 E_n(k,P)=
 \frac{r_n(k,P)}{r_n(k)}
 -
 \int_{S^{n-1}}
 P\widetilde{\omega}
\end{equation}
where we have abbreviated, as usual,
$r_n(k)=r_n(k,1)$ and $\widetilde{\omega}$ is the normalized volume form of~$S^{n-1}$, the standard one divided by $\mathop{\text{Vol}}(S^{n-1})$.
In \cite{MoTa} it is claimed $E_n(k,P)=O\big(k^{-(n-1)/4}\big)$.
Using that for $n>4$ it is known \cite[Cor.\;11.3]{iwaniec} that $r_n(k)<C_nk^{n/2-1}$, it would imply when $P$ is harmonic and non-constant that 
$\sum_{k\le K} \big(k^{\nu/2}r_n(k,P)\big)^2=O\big(K^{\nu+(n-1)/2}\big)$, and this contradicts the second formula of \eqref{auto_b} and finer estimates based on the Rankin-Selberg convolution. 
As the matter of fact $E_3(k,P)=o(1)$, the so-called ``Linnik problem'', 
remained as a conjecture during many years. Finally it was proved in 1988 by W.~Duke \cite{duke} using a breakthrough due to H.~Iwaniec \cite{iwaniec_h} and still the best known result is $E_3(k,P)=O_\alpha\big(k^{-\alpha}\big)$ for any $\alpha<1/28$. 

\section{Proof of Theorem~\ref{mainth}}

After the fundamental work of Weyl on the 
theory of compact Lie groups, the irreducible representations are determined by their highest weights \cite{sepanski}.
On the other hand, we know that the entries of the matrices of the irreducible representations are the eigenfunctions of the $-\Delta$ that is the differential form of the quadratic Casimir operator \cite[VIII.3]{knapp}, \cite[\S32]{taylor}. The outcome is that the eigenvalues are given by $\|\mu+\rho\|^2-\|\rho\|^2$ for $\mu$ in certain sectors of a lattice and $\rho$ a constant vector (see   \cite{fegan} for generalizations).
Moreover, the multiplicities are given by Weyl's dimension formula  \cite[\S7.3.4]{sepanski}. In the case of  $\text{SO}(N)$ the eigenvalues are indexed by $\vec{b}\in\Z^n$
where $n$ is the rank, 
with $b_1\ge b_2\ge\dots\ge |b_n|$ 
for $N=2n$ and  
with $b_1\ge b_2\ge\dots\ge b_n\ge 0$ 
for $N=2n+1$.  
In the even case, writing $x_j=b_j+n-j$, for each choice of 
$\vec{b}$ we have the eigenvalue  $\lambda_{\vec{b}}$ and its multiplicity $m$
\begin{equation}\label{eig_m}
\lambda_{\vec{b}}=
\sum_{j=1}^n x_j^2
-
\frac{n(n-1)(2n-1)}{6}
\qquad\text{and}\qquad
m
=
\Bigg(
\prod_{i=0}^{n-1}
\prod_{j=i+1}^{n-1}
\frac{x_{n-j}^2-x_{n-i}^2}{j^2-i^2}
\Bigg)^2.
\end{equation}
Similarly, in the odd case, writing $x_j=2b_j+2n-2j+1$, the formulas are 
\begin{equation}\label{eig_mo}
\lambda_{\vec{b}}=
\frac 14\sum_{j=1}^n x_j^2
-
\frac{n(4n^2-1)}{12}
\quad\text{and}\ \ 
m
=
\Bigg(
\prod_{i=0}^{n-1}
\frac{x_{n-i}}{2i+1}
\prod_{j=i+1}^{n-1}
\frac{x_{n-j}^2-x_{n-i}^2}{(2j+1)^2-(2i+1)^2}
\Bigg)^2.
\end{equation}

The relation with lattice points problems is given through the following result \cite{MoTa}. We provide a proof not appealing to the properties of the underlying Lie algebra. 
\begin{lemma}\label{lsum}
Consider $m$ in \eqref{eig_m} and \eqref{eig_mo}  as a polynomial in the $x_1,\dots, x_n$ and let $\chi_R$ the characteristic function of the ball of radius $R$ in $\R^n$. Then  for $N$ even
\[
 \mathcal{N}(\lambda)
=
\frac{1}{2^{n-1}n!}
\sum_{\vec{x}\in \Z^n}
m(\vec{x})
\chi_R(\vec{x})
\qquad\text{where}
\quad
R^2=\lambda 
+
\frac{n(n-1)(2n-1)}{6},
\]
and 
for $N$ odd,
\[
 \mathcal{N}(\lambda)
=
\frac{1}{2^{n}n!}
\sum_{\vec{x}\in \mathbb{O}^n}
m(\vec{x})
\chi_R(\vec{x})
\qquad\text{where}
\quad
R^2=4\lambda 
+\frac{n(4n^2-1)}{3}
\]
where $\mathbb{O}$ denotes the odd integers.
\end{lemma}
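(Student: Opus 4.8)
The plan is to begin from the spectral data assembled just before the lemma. Counting eigenvalues with multiplicity gives
\[
 \mathcal{N}(\lambda)=\sum_{\vec{b}}m(\vec{b}),
\]
where $\vec b$ ranges over the dominant indices in $\Z^n$ (those with $b_1\ge b_2\ge\dots\ge|b_n|$ in the even case and $b_1\ge\dots\ge b_n\ge 0$ in the odd case) subject to $\lambda_{\vec{b}}\le\lambda$. The first step is to perform the affine change of variables already recorded in \eqref{eig_m} and \eqref{eig_mo}, namely $x_j=b_j+(n-j)$ in the even case and $x_j=2b_j+2(n-j)+1$ in the odd case. Under this shift the eigenvalue inequality $\lambda_{\vec{b}}\le\lambda$ becomes exactly $\|\vec{x}\|^2\le R^2$ with the stated value of $R$, that is $\chi_R(\vec{x})=1$; and a direct reading of the dominance chain shows that, because consecutive $b_j$ differ by at least the gap introduced by the shift, the inequalities turn into the strict chain $x_1>x_2>\dots>x_{n-1}>|x_n|$ in the even case and $x_1>x_2>\dots>x_n>0$ in the odd case. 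In the first situation $\vec{x}$ runs over $\Z^n$ and in the second over $\mathbb{O}^n$.

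The central step is a symmetrization that avoids any appeal to the Lie algebra. Reading $m$ off the product formulas, it is a polynomial in $x_1^2,\dots,x_n^2$, symmetric under all permutations of the coordinates; being a product of the factors $x_a^2-x_b^2$ (and, in the odd case, also of $\prod_k x_k^2$), it vanishes whenever $|x_a|=|x_b|$ for some $a\neq b$ (and, in the odd case, whenever some $x_k=0$). Thus $m$ is invariant under the group of signed permutations of the coordinates and is supported on the \emph{regular} vectors, those with pairwise distinct $|x_j|$ (automatically with nonzero entries in the odd case, since the coordinates are odd). Since $\chi_R$ depends only on $\|\vec{x}\|$ it is invariant under the same group, so $m\chi_R$ is constant along orbits and is killed on every wall.

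Consequently, summing $m\chi_R$ over the whole lattice only sees the regular orbits, each counted with multiplicity equal to its cardinality, while the strictly dominant region described above supplies exactly one representative per regular orbit. Dividing by the common orbit size gives the claimed identity. In the even case the relevant symmetries are permutations together with an even number of sign changes, a group of order $2^{n-1}n!$, whereas in the odd case the extra factor $\prod_k x_k^2$ forces invariance under all sign changes, giving order $2^n n!$; these are precisely the prefactors $\tfrac{1}{2^{n-1}n!}$ and $\tfrac{1}{2^n n!}$.

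The step I expect to demand the most care is matching this group to the fundamental domain in the even case. There $x_n$ is allowed to be negative or zero (only $|x_n|$ is constrained) and only an even number of sign changes is permitted, so one must verify that every regular $\vec{x}\in\Z^n$ still has trivial stabilizer under this index-two subgroup, and hence a unique representative in the chamber $x_1>\dots>x_{n-1}>|x_n|$; the potentially awkward vectors with a vanishing coordinate must be checked directly, since there the free sign flip of that coordinate lets the even-sign-change constraint realize every sign pattern on the remaining entries, keeping the orbit size equal to $2^{n-1}n!$. The odd case is cleaner: the coordinates in $\mathbb{O}^n$ never vanish, the full signed-permutation group acts freely on regular vectors, and the only degeneracies to discard are the walls $|x_a|=|x_b|$, on which $m$ vanishes.
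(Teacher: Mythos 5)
Your proof is correct and takes essentially the same route as the paper's: change variables to the $x_j$, identify the eigenvalue index sets $\{x_1>\dots>x_{n-1}>|x_n|\}$ and $\{x_1>\dots>x_n>0\}$, and use the invariance of $m$ under signed permutations together with its vanishing on degenerate vectors to divide by the orbit sizes $2^{n-1}n!$ and $2^n n!$. You actually spell out details the paper's terser argument leaves implicit (the support of $m$ on regular vectors, and the orbit count for even-case vectors with a vanishing coordinate), and these checks are accurate.
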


\begin{proof}
Due to the relationship between $b_j$'s and $x_j$'s, the eigenvalues of $\text{SO}(N)$ can also be indexed by the elements of the set $C_E=\left\{\vec{x}\in\Z^n\;:\;x_1>x_2>\dots>|x_n|\right\}$ if $N$ is even, and by those of $C_O=\left\{\vec{x}\in \mathbb{O}^n\;:\;x_1>x_2>\dots>x_n>0\right\}$ if $N$ is odd. As $m(\vec{x})$ is invariant under ordering and sign changes in the $x_j$'s in both cases, we can consider for $N$ even all $n$-tuples in $\Z^n$ and then divide by the $2^{n-1}n!$ posibilities for those changes on $C_E$; and for $N$ odd all $n$-tuples in $\mathbb{O}^n$ and then divide by the $2^nn!$ posibilities for those changes on $C_O$. Thus, we obtain the expression for $\mathcal{N}(\lambda)$, just by noting that $\lambda_{\vec{b}}\leq\lambda$ implies $\|\vec{x}\|^2\le R^2$ in each case.
\end{proof}

After these preliminary considerations, we can deduce Theorem~\ref{mainth} for even $N$ in few lines from \eqref{auto_b} and a classic and basic result about an arithmetic function.

Each homogeneous polynomial $P$ of degree $g$ can be written uniquely as \cite{iwaniec}
\[
 P(\vec{x}) = \sum_{1\le l\le g/2}
 \|\vec{x}\|^{2l}P_{g-2l}(\vec{x})
 \qquad\text{with $P_j$ a harmonic polynomial of degree $j$}. 
\]
When we apply this to $m(\vec{x})$ that has even degree $d-n$, with $d=\dim\big(\text{SO}(2n)\big)$, separating the contribution of the constant harmonic polynomial, 
we deduce from Lemma~\ref{lsum} that there exists~$C$ and some harmonic polynomial $P$ of degree $0<\nu\le d-n$ such that 
\begin{equation}\label{nnlatt}
 \mathcal{N}(\lambda)
=
C
\sum_{\vec{x}\in \Z^n}
\|\vec{x}\|^{d-n}
\chi_R(\vec{x})
+O\Big(
\sum_{\vec{x}\in \Z^n}
\|\vec{x}\|^{d-n-\nu}
P(\vec{x})
\chi_R(\vec{x})
\Big).
\end{equation}
Grouping together the terms with $\|\vec{x}\|^2=k$, we have, with the notation as in \eqref{thetaP},
\begin{equation}\label{nlatt}
 \mathcal{N}(\lambda)
=
C
\sum_{k\le R^2}
k^{(d-n)/2}r_n(k)
+O\Big(
\sum_{k\le R^2}
k^{(d-n)/2}
r_n(k,P)
\Big).
\end{equation}
By the first formula in \eqref{auto_b} the $O$-term is 
$O\big(R^{d-n/2}\log R\big)$. On the other hand, a classic result states that the average behavior of the  arithmetic function $r_n(k)$ is
\begin{equation}\label{av_rn}
\sum_{k\le R^2}
r_4(k)
= C_4 R^4 +O(R^2\log R)
\ \text{ and }\ 
\sum_{k\le R^2}
r_n(k)
= C_n R^n +O(R^{n-2})
\quad\text{for $n>4$},
\end{equation}
where $C_n$ is the volume of the unit ball. 
The proof is a simple partial summation from Jacobi's formula for $r_4(n)$ \cite[p.\;22]{IwKo} and the simplest exponential sum method to avoid the logarithm when passing from $n=4$ to $5$ (for the details, see for instance \cite[\S15,\;Satz\;2]{fricker}). In fact, there are also elementary proofs of the formula for $r_4(n)$ \cite{venkov}
and of the exponential sums estimation \cite[Th.\;2.2]{GrKo}. 

When we substitute in \eqref{nlatt} the modular form estimate and the average result for $r_n(k)$, we conclude Theorem~\ref{mainth} with an unidentified constant that has to be $C_d$ to match \eqref{wlaw}.

\

The odd case  follows with some technical modifications. In this case one obtains 
\eqref{nnlatt} with~$\mathbb{Z}$ replaced by $\mathbb{O}$. The first sums gives readily the first sum in \eqref{nlatt} with $r_n(k)$ replaced by $r_n^*(k)$, defined as the number of representations as a sum of $n$ odd squares.  In connection with this, \eqref{av_rn} still holds (with different constants) when $r_n$ is replaced by $r_n^*$ because there is an analog for $r_4^*$ of Jacobi's formula (that it is indeed simpler \cite{carlitz}). 

On the other hand, given $B\subset\{1,2,\dots, n\}$ define $T_B$ as the diagonal matrix $\text{diag}(a_1,\dots, a_n)$ where $a_j=2$ if $j\in B$ and $a_j=1$ otherwise. By the inclusion-exclusion principle
\[
 \sum_{\vec{x}\in\mathbb{O}^n}
 \|\vec{x}\|^{d-n-\nu}
 P(\vec{x})
 \chi_R(\vec{x})
 =
 \sum_{B\subseteq\{1,\dots, n\}}
 (-1)^{|B|}
 \sum_{\vec{x}\in\mathbb{Z}^n}
\|T_B\vec{x}\|^{d-n-\nu}
 P_B(\vec{x})\chi_R(T_B\vec{x})
\]
with $P_B=P\circ T_B$.
Then the $O$-term in \eqref{nlatt} holds replacing $r_n(k,P)$ by
\[
 \quad
 r_n^*(k,P_B)
 =
 \sum_{Q(\vec{m})=k}
 P_B\Big(\frac{\vec{m}}{\sqrt{k}}\Big)
 \qquad\text{with }
 Q(\vec{x})=\|T_B\vec{x}\|^2.
\]
The polynomial $P_B$ is a spherical function with respect to the quadratic form $Q$ and the theory  assures that $k^{\nu/2}r_n^*(k,P_B)$ are the Fourier coefficients of a cusp form of weight $\nu+n/2$ \cite{iwaniec} and hence the same bound as in the even case applies.

\section{Some remarks about the true order of the error term}

A natural question is if the error term in Theorem~\ref{mainth} is sharp. The general result of \cite{DuGu} suggests that the exponent of $\lambda$ cannot be lowered. We give in this section a closer view taking advantage of the arithmetic interpretation. 

\medskip

As we saw in the previous section, when we substitute the first formula of \eqref{auto_b} in \eqref{nlatt} we get for even $N$ 
\begin{equation}\label{subs_m}
 \mathcal{N}(\lambda)
=
C
\sum_{k\le R^2}
k^{(d-n)/2}r_n(k)
+O\Big(
R^{d-n/2}\log R
\Big)
\end{equation}
and a similar formula for odd $N$ replacing $r_n$ by $r_n^*$. 

On the other hand, it is known for $n>4$ that  $r_n(k)>C_n k^{n/2-1}$ and $r_n^*(k)>C_n k^{n/2-1}$, under $4\mid k-n$,
for certain constant $C_n>0$ \cite[Cor.\;11.3]{iwaniec} \cite{carlitz}. 
It implies that when $R^2$ reaches an integer, $\mathcal{N}(\lambda)$ increases by an amount comparable to $R^{d-2}$. In particular, in Theorem~\ref{mainth} it is neither possible to change 
$O\big(\lambda^{d/2-1}\big)$
by
$o\big(\lambda^{d/2-1}\big)$
nor to extract a smooth secondary main term (even if $\lambda$ is restricted to integral values $r_n(k)$ \cite[\S11.5]{iwaniec}).

The case $n=4$ is more involved. Without entering into details, adapting the folklore techniques reviewed in \cite{HaIv} it is possible to go beyond the first bound in \eqref{auto_b} to get
\[
 \mathcal{N}(\lambda)
=
C
\sum_{k\le R^2}
k^{(d-4)/2}r_4(k)
+O\Big(
R^{d-\alpha_0}\log R
\Big)
\qquad\text{for certain $\alpha_0>2$}. 
\]
When $k$ is the product the the first odd primes, from the formula  for $r_4(k)$ and Mertens formula, it follows that   $r_4(k)\sim \frac{48e^{\gamma}}{\pi^2}k\log\log k$.
As before, one concludes as before that $\mathcal{N}(\lambda)$ is not
$o\big(\lambda^{d/2-1}\log\log\lambda\big)$. 
On the other hand, 
in \eqref{av_rn} $\log R$ can be lowered to $(\log R)^{2/3}$ with advanced methods due to N.M.~Korobov and I.M.~Vinogradov \cite{walfisz2}.
In Weyl's law it translates into the error term
$O\big(\lambda^{d/2-1}(\log\lambda)^{2/3}\big)$ for $n=4$.
The gap between $(\log R)^{2/3}$ and the barrier $\log\log R$ in the error term for the average of $r_4(n)$ has remained unchanged during the last 50 years
(see \cite{IvKr} for more information).
The same uncertainty applies to the average of $r_4^*(n)$.

\section{The cases $n<4$}

We now discuss the low dimensional cases that are not covered by the sharp estimates in \cite{MoTa}. 

For $n=1$, $\text{SO}(2)$ is just $S^1$ and the eigenfunctions  correspond to the standard orthonormal system $\{ e^{ikx}\}_{k\in\Z}$ for Fourier series. Then we have plainly 
\[
 \mathcal{N}(\lambda)
 =
 \#\big\{k\in\Z\;:\;k^2\le \lambda\big\}
 =2\lambda^{1/2}+O(1)
\]
and the $O(1)$ is sharp because $\mathcal{N}(\lambda)$ increases in one when $\lambda$ is a square. Note that the constant~$2$ matches the right hand side of \eqref{wlaw} for $M=S^1$.

In the odd case, for $n=1$ we have to consider $\text{SO}(3)$ with eigenvalues of the form $l(l+1)$, corresponding to the 
angular momentum operator in quantum physics, with multiplicity $(2l+1)^2$ and clearly
\[
 \mathcal{N}(\lambda)
 =
 \sum_{l(l+1)\le \lambda}
 (2l+1)^2
 =
 \frac{4}{3}\lambda^{3/2}
 +O\big(\lambda\big),
\]
that is sharp again.

It is known that $\sum_{k\le R^2}r_3(k)= \frac{4\pi}{3}R^3+O\big(R^{\alpha_3}\big)$ for certain $\alpha_3<3/2$ and the same applies to $\sum_{k\le R^2}r_3^*(k)$, with a different constant in the main term (the best known result in this direction allows to take any $\alpha_3>21/16$ \cite{heath} \cite{ChCrUb}). Then \eqref{subs_m} after partial summation gives Weyl's law for $n=3$ in the form
\[
 \mathcal{N}(\lambda)
 =
 C_d\lambda^{d/2}+O\big(\lambda^{d/2-3/4}\log \lambda\big).
\]
With the aforementioned techniques in \cite{HaIv} it seems possible to reduce the exponent
(one should proceed as indicated at the end of the paper for Maass forms using \cite{duke}, see also \cite{ChCrUb})
beating Theorem~4.1 of \cite{MoTa} for $N=6$ and $N=7$.  

In the case $n=2$ there is not equidistribution of the points $\|\vec{m}\|^2=k$ and the modular form approach seems superfluous.  
We show how to go beyond \cite{MoTa} using exponential sums. We focus on the even case. The odd one follows under the same lines using the formulas of the second part of  Lemma~\ref{lsum}.
The multiplicity in \eqref{eig_m} is given by the polynomial $m(x,y)=(x^2-y^2)^2$. By Lemma~\ref{lsum} and the symmetry 
\[
  \mathcal{N}(\lambda)
=
2\mathop{\sideset{}{'}\sum}_{0\le x\le  R/\sqrt{2}}
\
\sum_{y\in I_x}
m(x,y)
\qquad\text{where}\quad
I_x=\big(x, \sqrt{R^2-x^2}\big]
\]
and the prime in the first sum means that the  contribution of $x=0$ is halved. Using the variant of Euler-Maclaurin summation 
$\sum_{a<n\le b} f(n)
=
\psi(a)f(a)-\psi(b)f(b)+\int_a^b(f+\psi f')$
with $\psi(t)= t-\lfloor t\rfloor -1/2$ 
(cf. \cite[Th.\;4.2]{apostol}, it was stated in 1885 by N.Y.~Sonin), 
we have 
\[
  \mathcal{N}(\lambda)
=
T_1+T_2+T_3
\qquad\text{with}\quad
T_1= 
2\mathop{\sideset{}{'}\sum}_{0\le x\le  R/\sqrt{2}}
\int_{I_x} m(x,y)\; dy
\]
and
\[
T_2= 
2\hspace{-5.1pt}\mathop{\sideset{}{'}\sum}_{0\le x\le  R/\sqrt{2}}
(R^2-2x^2)^2\psi\big(\sqrt{R^2-x^2}\big)
,\quad
T_3= 
2\hspace{-5.1pt}
\mathop{\sideset{}{'}\sum}_{0\le x\le  R/\sqrt{2}}
\int_{I_x} 4y(y^2-x^2)\psi(y)\; dy.
\]
By partial integration $T_3=O(R^4)$. A new application of  Euler-Maclaurin summation  in $T_1$ gives
\[
 T_1
 =
 2\int_{0}^{R/\sqrt{2}}
 \int_x^{\sqrt{R^2-x^2}} m(x,y)\; dydx
 +
 O(R^4) 
 =
 \frac 14
 \int_{\R^2} m\chi_R
 +
 O(R^4) 
\]
which is $C_d\lambda^{d/2}+O\big(\lambda^{d/2-1}\big)$.
For $T_2$ we employ the existence 
of two finite Fourier series 
$Q^\pm(x)=\sum_{|m|\le M} a_m^\pm e^{2\pi imx}$  for each $M\in\Z^+$
such that $Q^-(x)\le \psi(x)\le Q^+(x)$
with $a_0^\pm =O\big( M^{-1}\big)$ and $a_m^\pm =O\big(  m^{-1}\big)$ (see for instance \cite{montgomery}).
Substituting in $T_2$ and applying a van der Corput exponent pair \cite{GrKo} $(\alpha,\beta)$  to the sum on $x$, we have
$T_2=O\big(M^\alpha R^{\beta+4}+ M^{-1}R^5\big)$ that is optimal taking $M\asymp R^{(1-\beta)/(1+\alpha)}$. The valid choice $\alpha=11/30$, $\beta=16/30$ gives $O\big(R^{191/41}\big)$ and then for $n=2$
\[
 \mathcal{N}(\lambda)
 =
 C_d\lambda^{d/2}+O\big(\lambda^{d/2-55/82}\big)
\]
with tiny improvements for other choices of the exponent pair. 

This slightly improves the cases $N=4$ and $N=5$ of  Theorem~4.1 in \cite{MoTa}.


\end{document}